\def\bfB{\mathbf{B}}
\DeclareMathOperator{\Fbar}{\overline{\mathbb{F}}}
\DeclareMathOperator{\Mat}{\operatorname{M}}
\DeclareMathOperator{\Ker}{\operatorname{Ker}}
\DeclareMathOperator{\Hom}{\operatorname{Hom}}
\DeclareMathOperator{\Vect}{\operatorname{span}}
\DeclareMathOperator{\im}{\operatorname{Im}}
\DeclareMathOperator{\tr}{\operatorname{tr}}
\DeclareMathOperator{\rk}{\operatorname{rk}}
\renewcommand{\setminus}{\smallsetminus}
\renewcommand{\epsilon}{\varepsilon}
\def\F{\mathbb{F}}
\def\calM{\mathcal{M}}
\def\calS{\mathcal{S}}
\def\calT{\mathcal{T}}
\def\lcro{\mathopen{[\![}}
\def\rcro{\mathclose{]\!]}}
\theoremstyle{definition}
\theoremstyle{plain}
\newtheorem{theo}{Theorem}[section]
\newtheorem{lemma}[theo]{Lemma}
\theoremstyle{plain}
\theoremstyle{remark}
\title{A connection between Schur and Dieudonn\'e's theorems on spaces of bounded rank matrices}
\author{Cl\'ement de Seguins Pazzis\footnote{Universit\'e de Versailles Saint-Quentin-en-Yvelines, Laboratoire de Math\'ematiques
de Versailles, 45 avenue des Etats-Unis, 78035 Versailles cedex, France}
\footnote{e-mail address: dsp.prof@gmail.com}}
\begin{document}

\thispagestyle{plain}

\maketitle
\begin{abstract}
We use a double-duality argument to give a new proof of Dieudonn\'e's theorem
on spaces of singular matrices. The argument connects the situation to the structure of spaces of operators
with rank at most $1$, and works best over algebraically closed fields.
\end{abstract}

\vskip 2mm
\noindent
\emph{AMS MSC:} 15A30; 15A03

\vskip 2mm
\noindent
\emph{Keywords:} linear subspaces, operator-vector duality, rank, invertible matrices, dimension.

% Relecture 3/4 achevée

\section{Introduction}

Throughout, we consider a field $\F$ and denote by $\Mat_{n,p}(\F)$ the space of all matrices with $n$ rows, $p$ columns, and entries in $\F$,
and by $\Mat_n(\F):=\Mat_{n,n}(\F)$, whose identity matrix is denoted by $I_n$.

The study of linear subspaces of matrices with restrictive conditions on the their elements
was started in 1948 by Dieudonn\'e \cite{Dieudonne}, and latter picked up by Gerstenhaber \cite{GerstenhaberI} and Flanders \cite{Flanders}, although it
is apparent that Dieudonn\'e's article went unnoticed by these authors at the time.

Dieudonn\'e's result states the following:

\begin{theo}[Dieudonn\'e, th\'eor\`eme 1 in \cite{Dieudonne}]\label{theo:Dieudo}
Let $\calM$ be an affine subspace of $\Mat_n(\F)$ that contains no invertible matrix.
Then $\dim \calM \leq n^2-n$, and if equality holds then either
the nullspaces of the matrices of $\calM$ have a common nonzero vector, or the same is true of $\calM^T$,
or $n=|\F|=2$ and $\calM$ is equivalent to the affine space of all upper-triangular matrices with trace $1$.
\end{theo}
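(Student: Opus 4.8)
\emph{Proof proposal.} The plan is a double-duality argument: one plays the orthogonality pairing $(M,N)\mapsto\tr(MN)$ on $\Mat_n(\F)$ against the classical projective duality that sends the determinantal hypersurface to the variety of rank-one matrices, and then invokes the classical structure theorem for spaces of matrices of rank at most $1$ — this is the ``Schur'' side of the picture.

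\emph{Reduction to a linear space of singular matrices.} Fix $B\in\calM$ and write $\calM=B+\calV$ with $\calV$ linear. For $V\in\calV$ the line $B+tV$ lies in $\calM$, so the polynomial $t\mapsto\det(B+tV)$, of degree $\le n$, vanishes on all of $\F$; when $|\F|$ is large enough compared with $n$ this forces it to be the zero polynomial, and its coefficient of $t^n$ being $\det V$, we get $\det V=0$. Thus $\calV$ is then a linear subspace of singular matrices with $\dim\calV=\dim\calM$; for the same reason this property is preserved by extension of scalars, so we may work over $\overline{\F}$ and are reduced to showing that a linear subspace $\calV\subseteq\Mat_n(\overline{\F})$ of singular matrices has $\dim\calV\le n^2-n$, with equality only if $\calV=\{M:\ Mx_0=0\}$ or $\calV=\{M:\ y_0^TM=0\}$ for some nonzero vector, resp.\ covector. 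The finitely many fields with $|\F|\le n$ must be examined directly; this is where the sporadic extremal space appears, for $n=|\F|=2$.

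\emph{The duality and the dimension bound.} Let $D=\{\det=0\}$. By Jacobi's formula the differential of $\det$ at $M_0$ is $\tr(\operatorname{adj}(M_0)\,\cdot\,)$, so $D$ is smooth exactly along the rank-$(n-1)$ locus, where the tangent hyperplane is the orthogonal of the rank-one matrix $\operatorname{adj}(M_0)$; equivalently, projective duality carries $D$ to the variety of rank-$\le 1$ matrices. Let $\calV$ be a linear space of singular matrices. We may assume it contains a rank-$(n-1)$ matrix, the case $\maxrk(\calV)\le n-2$ being easier and handled separately. For every $M\in\calV$ and every rank-$(n-1)$ matrix $M_0\in\calV$, the line $M_0+tM$ stays in $D$, so differentiating $\det$ at $t=0$ gives $\tr(\operatorname{adj}(M_0)M)=0$: the adjugate map carries the dense open set $U$ of rank-$(n-1)$ elements of $\calV$ into $\calW:=\calV^\perp$, and manifestly into the rank-$\le 1$ variety. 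Over $\overline{\F}$, every fibre of $\operatorname{adj}$ over a \emph{nonzero} matrix has dimension $(n-1)^2-1$ (it consists of the rank-$(n-1)$ matrices with a prescribed kernel line, cokernel line and adjugate scalar), while $\operatorname{adj}(U)\subseteq\calW\cap\{\rk\le1\}$, of dimension $\le\dim\calW=n^2-\dim\calV$. Hence the fibre-dimension theorem gives $\dim\calV=\dim\operatorname{adj}(U)+c$, where $c$ is the generic fibre dimension, so $\dim\calV\le(n^2-\dim\calV)+((n-1)^2-1)$, that is $\dim\calV\le n^2-n$.

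\emph{Equality, Schur, and return to $\calM$.} If $\dim\calV=n^2-n$ then every inequality above is an equality, so $\calW\cap\{\rk\le1\}$ has the same dimension as the irreducible variety $\calW$ and therefore equals it: $\calW$ consists entirely of rank-$\le 1$ matrices. Now the Schur-type classification applies — an $n$-dimensional linear space of rank-$\le 1$ matrices is, up to equivalence, $\{x_0y^T:\ y\in\overline{\F}^n\}$ or $\{xy_0^T:\ x\in\overline{\F}^n\}$ — whence $\calV=\calW^\perp$ equals $\{M:\ Mx_0=0\}$ or $\{M:\ y_0^TM=0\}$. Returning to $\calM=B+\calV$: in the first case $Mx_0=Bx_0$ for all $M\in\calM$, so either $Bx_0=0$ and $x_0$ is a common vector of the nullspaces of $\calM$, or $Bx_0\neq0$ and, after an equivalence, $\calM$ becomes the set of all matrices with one prescribed nonzero column, which meets $\GL_n$ — impossible at dimension $n^2-n$. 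The second case is symmetric and yields a common nullvector of $\calM^T$; a Galois-descent argument brings the conclusion back to $\F$. The step I expect to be the main obstacle is the bookkeeping of the third paragraph — identifying the fibres of $\operatorname{adj}$ precisely and running the fibre-dimension estimate so that it closes with no loss of a unit, and then squeezing out of equality the \emph{rigidity} that $\calW$ is a \emph{full} rank-one space — together with the hands-on treatment of the small finite fields, out of which the exceptional space for $n=|\F|=2$ emerges.
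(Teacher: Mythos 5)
Your argument for the generic case is genuinely different from the paper's and, in its core, sound. The paper works with the trace-orthogonal complement $S^{\bot}$ of the translation space and a second duality (the evaluation operators $\widehat{y}$), proves the bound by induction via an extraction lemma, and reaches Schur's theorem through the observation that the $\widehat{y}$ can only have ranks $0$, $1$ and $p$. You instead reach Schur's theorem through the adjugate: $\operatorname{adj}$ carries the rank-$(n-1)$ locus $U$ of $\calV$ into $\calV^{\bot}\cap\{\rk\le 1\}$, the fibre-dimension count $\dim\calV\le(n^2-\dim\calV)+(n-1)^2-1$ gives the bound, and equality forces $\overline{\operatorname{adj}(U)}=\calV^{\bot}$ by irreducibility, so $\calV^{\bot}$ is an $n$-dimensional rank-one space to which Schur's theorem applies. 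I checked the fibre computation (prescribed kernel line, cokernel line and adjugate scalar give $(n-1)^2-1$) and the rigidity step; both are correct over $\overline{\F}$, and the return from $\calV$ to $\calM=B+\calV$ is handled correctly. This geometric route makes the appearance of rank-one spaces conceptually transparent (duality of the determinantal hypersurface with the Segre variety), whereas the paper's route is purely linear-algebraic, covers rectangular matrices, and degrades more gracefully over small fields.

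That said, two steps you defer are genuine gaps, not bookkeeping. First, the case $\maxrk(\calV)\le n-2$ is not ``easier'': there the adjugate of every element of $\calV$ is zero and your mechanism produces nothing, and bounding the dimension in that case (by $n(n-2)$, say) is essentially Flanders' theorem, a result of comparable depth that you would have to prove or import; the paper never meets this issue because its induction on $(n,p)$ does not require a rank-$(n-1)$ element of the translation space. Second, and more seriously, the fields with $|\F|\le n$ cannot be ``examined directly'': as $n$ grows this is an infinite family of pairs $(n,\F)$, and it is precisely where the affine-to-linear reduction fails (witness the exceptional space for $n=|\F|=2$) and where scalar extension breaks, since a polynomial of degree $n$ can vanish identically on $\F^k$ without being the zero polynomial. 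The paper devotes a full section to this regime, splitting on whether $\widehat{S^{\bot}}$ contains a rank-$p$ operator and invoking the classification of range-compatible maps; some substitute for that work is unavoidable. Until these two cases are supplied, your proof establishes the theorem only for linear subspaces over fields with more than $n$ elements.
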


Considerable improvements on this result have been made in the last decades, with arbitrary upper-bounds on the rank instead of $n-1$
\cite{Flanders,Meshulam,dSPaffpres} as well as generalizations to spaces in a wide range of dimensions below the critical one
\cite{AtkLloyd,Beasley,dSPclassIsrael,dSPlargerankrevisited}. An account of the most up-to-date results on the topic can be found in the introduction of \cite{dSPlargerankrevisited}. In \cite{Dieudonne}, Theorem \ref{theo:Dieudo} was used as a lemma to determine the automorphisms of the vector space $\Mat_n(\F)$
that preserve singularity, but in our view this theorem is just as interesting as its application to one of the earliest, and most widely known, linear preserver problem.

In this note, we wish to give a new proof of Dieudonn\'e's theorem, generalized to spaces of rectangular matrices. This proof uses duality arguments to connect the problem to a folklore result, which is generally attribued to Issai Schur, and works best when the field is algebraically closed.
Let us start by restating this basic result. There and throughout $U^\star$ denotes the dual vector space of a vector space $U$, and
for vector spaces $U$ and $V$, a linear form $f \in U^\star$ and a vector $y \in V$, we write
$$f \otimes y : x \in U \mapsto f(x)\,y \in V.$$

\begin{theo}[Schur's theorem]
Let $U$ and $V$ be vector spaces, and $S$ be a nonzero linear subspace of $\Hom(U,V)$ in which every operator has rank at most $1$.
Then one of the following holds:
\begin{enumerate}[(i)]
\item There exist a non-zero linear form $f \in U^\star$ and a linear subspace $V_0$ of $V$ such that $S=f \otimes V_0$.
\item There exist a non-zero vector $y \in V$ and a linear subspace $U'_0$ of $U^\star$ such that $S=U'_0 \otimes y$.
\end{enumerate}
\end{theo}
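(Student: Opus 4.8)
My plan is to bootstrap the global structure of $S$ from the structure of a single nonzero element, relying on the elementary fact that a nonzero operator of rank at most $1$ factors as $f \otimes y$ with $f \in U^\star$ and $y \in V$ both nonzero, $f$ being determined up to a scalar (its kernel is the kernel of the operator) and $y$ up to a scalar (it spans the image). So I would start by fixing a nonzero $\phi = f \otimes y \in S$.

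The engine of the proof is the following observation, which I would state and prove first: \emph{if $f_1 \otimes y_1$ and $f_2 \otimes y_2$ both belong to $S$, and $y_1,y_2$ are linearly independent, then $f_1$ and $f_2$ are proportional.} Indeed $f_1 \otimes y_1 + f_2 \otimes y_2$ lies in $S$ and hence has rank at most $1$; but if $f_1$ and $f_2$ were linearly independent, then the linear map $x \mapsto (f_1(x),f_2(x))$ from $U$ to $\F^2$ would be onto (otherwise some nontrivial combination $a f_1 + b f_2$ would vanish identically), so the image of $f_1 \otimes y_1 + f_2 \otimes y_2$ would contain both $y_1$ and $y_2$, contradicting rank $\leq 1$. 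By symmetry, linearly independent forms force proportional image vectors, but I will use only the stated direction.

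Next I would split into two cases according to the behaviour of $\phi = f \otimes y$. \textbf{Case 1:} every element of $S$ has image contained in $\F y$. Then every element of $S$ has the form $g \otimes y$, and $S = U'_0 \otimes y$ where $U'_0 := \{g \in U^\star : g \otimes y \in S\}$ is visibly a linear subspace of $U^\star$ (and is nonzero since it contains $f$); this is conclusion (ii). \textbf{Case 2:} some $\psi \in S$ has image not contained in $\F y$. Writing $\psi = f' \otimes y'$, the vectors $y$ and $y'$ are linearly independent, so the observation gives $f' \in \F f$; after rescaling, $\psi = f \otimes y''$ with $\{y,y''\}$ linearly independent. Then for any nonzero $\chi = h \otimes z \in S$, the vector $z$ cannot be proportional to both $y$ and $y''$ (else $y$ and $y''$ would be proportional), so $\{z,y\}$ or $\{z,y''\}$ is linearly independent, and applying the observation to $\chi$ together with $\phi$, respectively with $\psi$, yields $h \in \F f$. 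Hence every element of $S$ has the form $f \otimes v$, and $S = f \otimes V_0$ with $V_0 := \{v \in V : f \otimes v \in S\}$ a linear subspace of $V$; this is conclusion (i).

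I do not anticipate a genuine obstacle here: the only point deserving a moment's care is that the argument be carried out with no finiteness assumption on $\dim U$ or $\dim V$, which is unproblematic since the one place where dimension could conceivably intervene — surjectivity of $x \mapsto (f_1(x),f_2(x))$ for linearly independent $f_1,f_2$ — holds over an arbitrary field in arbitrary dimension. No hypothesis on $\car \F$ or on $|\F|$ is needed.
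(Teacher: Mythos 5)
Your proof is correct and complete: the key observation (two rank-one elements of $S$ with independent image vectors must have proportional forms, because otherwise their sum has rank $2$) is exactly the right engine, and the two-case split cleanly yields conclusions (i) and (ii), with no hidden dependence on $\dim U$, $\dim V$, $\car\F$ or $|\F|$. There is nothing to compare against here: the paper states Schur's theorem as a folklore result and gives no proof of it, and your argument is the standard self-contained one.
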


In contrast, spaces of operators with rank at most $2$ are substantially more complicated, although their classification is regularly useful in linear algebra
and operator theory (see e.g.\ \cite{BresarSemrl}).

\section{The result}

Here is the result we wish to prove:

\begin{theo}\label{theo:main}
Let $U$ and $V$ be vector spaces with respective finite dimensions $p \geq n$.
Let $\calS$ be an affine subspace of $\Hom(U,V)$ that contains no rank $n$ operator (i.e.\ no surjective operator).
Then $\dim \calS \leq p(n-1)$. Moreover, if $\dim \calS=p(n-1)$, then:
\begin{enumerate}
\item[(i)] Either some linear hyperplane $H$ of $V$ includes the range of every operator in $\calS$;
\item[(ii)] Or $n=p$ and some non-zero vector $x \in U$ is annihilated by all the operators in $\calS$;
\item[(iii)] Or $n=p=|\F|=2$ and $\calS$ is not a linear subspace.
\end{enumerate}
\end{theo}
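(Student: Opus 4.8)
The plan is to transport the problem to Schur's theorem by a duality. Identify $\Hom(U,V)^\star$ with $\Hom(V,U)$ through the trace pairing $\langle u,w\rangle:=\tr(uw)$, and for a linear subspace $\calT$ of $\Hom(U,V)$ write $\calT^\perp\subseteq\Hom(V,U)$ for its orthogonal. For $f\in V^\star$ and $y\in U$ one has $\langle u,f\otimes y\rangle=f\bigl(u(y)\bigr)$, so $u$ is orthogonal to the maximal rank-one space $f\otimes U:=\{f\otimes y:y\in U\}$ exactly when $f\circ u=0$; hence \emph{$\calS$ contains no surjective operator if and only if every $u\in\calS$ lies in $(f\otimes U)^\perp$ for some $f\in V^\star\setminus\{0\}$}. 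The guiding principle is that when $\calS$ has the critical dimension $p(n-1)$, its orthogonal $\calS^\perp$ should consist \emph{entirely} of operators of rank at most $1$; Schur's theorem then pins down $\calS^\perp$, and dualizing recovers $\calS$. This already matches the extremal cases: in alternative (i) one has $\calS=\Hom(U,H)$ with $\calS^\perp=f\otimes U$ for $f$ cutting out $H$, and in alternative (ii) one has $p=n$, $\calS=\{u:u(x_0)=0\}$ and $\calS^\perp=V^\star\otimes x_0$ — precisely the two families occurring in Schur's theorem.

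\textbf{Reduction to the linear case.} When $\F$ is infinite, the direction space $\calV$ of an affine $\calS$ with no surjective operator again has no surjective operator: fixing $A\in\calS$, the Zariski closure of the line $\{t^{-1}A+v:t\in\F^{\times}\}$ contains $v$ and lies in the closed locus of non-surjective operators. Since $\dim\calV=\dim\calS$, it suffices to classify linear spaces, and in the extremal case a short verification shows that the position of $A$ relative to the structure of $\calV$ forces $\calS$ to be linear and of the form in (i) or (ii). Over a finite field this Zariski argument breaks down, and a direct counting argument in the spirit of Dieudonn\'e's treatment of $|\F|=2$ is needed to show that a genuinely affine extremal space can occur only for $n=p=|\F|=2$, which is alternative (iii). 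Henceforth $\calS$ is a linear subspace with no surjective operator.

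\textbf{Degenerate configurations.} If $V':=\sum_{u\in\calS}\im u$ is a proper subspace of $V$, then $\calS\subseteq\Hom(U,V')$, so $\dim\calS\le p\,\dim V'\le p(n-1)$, with equality forcing $\dim V'=n-1$ and $\calS=\Hom(U,V')$ — alternative (i). If some nonzero $x_0\in U$ is annihilated by every operator of $\calS$, then $\calS\hookrightarrow\Hom(U/\F x_0,V)$; when $p>n$ an induction on $\dim U$ (the quotient still carries no surjective operator) gives $\dim\calS\le(p-1)(n-1)<p(n-1)$, and when $p=n$ one gets $\dim\calS\le n(n-1)=p(n-1)$ with equality exactly when $\calS=\{u:u(x_0)=0\}$ — alternative (ii).

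\textbf{The main case and the key obstacle.} There remains a linear $\calS$ with no surjective operator for which $\sum_{u\in\calS}\im u=V$ and $\bigcap_{u\in\calS}\Ker u=\{0\}$; I claim then $\dim\calS<p(n-1)$. Assume otherwise, so $\dim\calS^\perp\le p$. The heart of the argument — and the step I expect to be the main obstacle — is to prove that \emph{every operator in $\calS^\perp$ has rank at most $1$}. I would establish this by choosing $u_0\in\calS$ of maximal rank $n-1$ (the case $\maxrk\calS\le n-2$ being handled separately, since it cannot attain the critical dimension), normalizing $u_0$ to $\bigl(\begin{smallmatrix}I_{n-1}&0\\0&0\end{smallmatrix}\bigr)$, reading off from the conditions ``$\rk(u_0+tu)\le n-1$ for all $t\in\F$'', together with their polarizations, the Flanders-type block relations satisfied by a general $u\in\calS$, and then checking by a direct computation that these relations force every element of $\calS^\perp$ to be decomposable. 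Granting this, Schur's theorem gives $\calS^\perp\subseteq f\otimes U$ for some $f\in V^\star\setminus\{0\}$, or $\calS^\perp\subseteq V'_0\otimes x_0$ for some $x_0\in U\setminus\{0\}$ and some subspace $V'_0\subseteq V^\star$. Dualizing, $\calS$ then contains $\Hom(U,\Ker f)$ in the first case and $\{u:u(x_0)\in(V'_0)^\perp\}$ in the second; in each case the non-degeneracy hypotheses are violated — directly, or after using an element of $\calS$ lying outside the relevant maximal space to build a surjective operator inside $\calS$. This contradiction proves $\dim\calS<p(n-1)$ in the main case. Combining the three cases yields the bound $\dim\calS\le p(n-1)$ and shows that equality forces (i), (ii), or the affine sporadic configuration (iii).
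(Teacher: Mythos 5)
Your overall architecture (trace duality, Schur's theorem, dualize back) is the same idea that drives the paper, and your endgame (dualizing $\calS^\perp\subseteq f\otimes U$ or $\calS^\perp\subseteq W\otimes x_0$ back to alternatives (i) and (ii)) is workable. But there is a genuine gap exactly at the step you yourself flag as ``the main obstacle'': the claim that in the critical case every operator of $\calS^\perp$ has rank at most $1$. This is essentially the whole content of the theorem, and the sketch you offer for it (normalize a rank-$(n-1)$ element $u_0$, extract Flanders-type block relations from $\rk(u_0+tu)\le n-1$ by polarization, then ``check by a direct computation'') is not an argument: the polarization step already fails over small fields --- over $\F_2$ the condition $\rk(u_0+tu)\le n-1$ for all $t$ amounts to only two constraints and yields no bilinear identities, which is precisely why the exceptional case $n=p=|\F|=2$ exists --- and you never explain how relations satisfied inside $\calS$ translate into decomposability of elements of the orthogonal space. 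The paper does \emph{not} prove your claim directly; it passes to a second dual, the space $\calT=\widehat{S^\perp}\subseteq\Hom(S^\perp,U)$ of evaluation maps $\widehat y$, proves via the identity $\rk\widehat y=p-\dim S_{(y)}$ and an inductive inequality argument that the only ranks occurring in $\calT$ are $0$, $1$ and $p$, and then must still work hard (an eigenvalue argument over an algebraic closure, scalar extension for $|\F|>n$, and a separate range-compatibility argument for the remaining fields) precisely because rank-$p$ elements of $\calT$ do occur even in the extremal example $\calS=\Hom(U,H)$ (for $y\notin H$ one has $S_{(y)}=0$, hence $\rk\widehat y=p$). So the difficulty you defer is real and is not removable by a routine computation.

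Two further points. First, you never establish the inequality $\dim\calS\le p(n-1)$ independently of the key claim: your ``main case'' is a proof by contradiction that presupposes $\dim\calS^\perp\le p$, and the degenerate cases alone do not cover all spaces. The paper proves the inequality first, by induction on $(n,p)$ using an extraction lemma (if $\rk A<n$ and $\rk(A+E_{n,p})<n$ then the upper-left $(n-1)\times(p-1)$ block has rank $<n-1$), and the same induction is what delivers the dichotomy $\dim S_{(y)}\in\{0,p-1,p\}$ needed in the equality case. Second, the reduction from affine to linear over finite fields, and the isolation of alternative (iii), are asserted (``a direct counting argument \dots is needed'') rather than proved; in the paper this is where the range-compatible homomorphism machinery of \cite{dSPRC1} enters. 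As it stands the proposal is a plausible plan whose two hardest steps are placeholders.
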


Note that Dieudonn\'e's theorem only allows $n=p$.

\section{The proof}

\subsection{Main tools}

Our new proof uses a traditional approach of looking at the rank $1$ operators in the translation vector space $S$ of $\calS$, and
combines it with a double-duality argument that pushes further the operator-vector duality
method that has produced many spectacular applications in the last decade \cite{dSPAtkinsontoGerstenhaber,dSPLLD1,dSPLLD2}.
The first duality is the standard trace duality: to the translation vector space $S$ we assign its trace orthogonal complement
$$S^\bot:=\{v \in \Hom(V,U) : \; \forall u \in S, \; \tr(v \circ u)=0\}.$$
To $S^\bot$, we assign the dual-operator space $\widehat{S^\bot} \subseteq \Hom(S^\bot,U)$, consisting of all the evaluation mappings
$$\widehat{y} : v \in S^\bot \mapsto v(y) \in U, \quad \text{with $y \in V$.}$$
The key to the proof will be the following: consider a non-zero vector $y \in V \setminus \{0\}$.
Then the space
$$S_{(y)}:=S \cap \Hom(U,\F y)=S \cap (U^\star \otimes y),$$
which consists of the operators in $S$ with range included in $\F y$, is precisely
$(\im \widehat{y})^\circ \otimes y$, where $(\im \widehat{y})^\circ$ stands for the dual-orthogonal of $\im \widehat{y}$ in $U^\star$.
This follows from the basic identity $\tr(a\circ (f \otimes y))=\tr(f \otimes a(y))=f(a(y))$ for all $f \in U^\star$ and $a \in \Hom(V,U)$,
and from the double-orthogonality identity $S=(S^\bot)^\bot$.
As a consequence,
\begin{equation}\label{equation:ranghaty}
\rk \widehat{y}=p-\dim S_{(y)}.
\end{equation}
This relationship will be the key to our proof of the case of equality.

Finally, to streamline the proof, we extract two basic lemmas and prove them right away:

\begin{lemma}[Substitution lemma]\label{lemma:substitution}
Let $0<n \leq p$ be positive integers.
Let $A \in \Mat_{n,p}(\F)$ have both its first row and first column nonzero.
Then one can modify all the other entries of $A$ so as to obtain a rank $n$ matrix.
Moreover, if $n<p$ then the same conclusion holds under the weaker assumption that the first row of $A$ is nonzero.
\end{lemma}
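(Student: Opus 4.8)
The plan is to prove the Substitution Lemma by a direct induction on $n$, exploiting the block structure that a single nonzero row and column provide. First I would set up coordinates: after permuting rows and columns (which does not affect rank) I may assume the $(1,1)$ entry of $A$ is nonzero, say equal to $1$ up to scaling the first row. Writing
$$A = \begin{pmatrix} 1 & * \\ * & A' \end{pmatrix}$$
with $A' \in \Mat_{n-1,p-1}(\F)$, I am free to choose every entry outside the first row and first column; in particular I may first clear the first column below the corner by row operations that only modify the allowed entries, reducing to the case where the first column is $e_1$. Then the rank of $A$ equals $1 + \rk A'$, and since I may set $A'$ to be anything in $\Mat_{n-1,p-1}(\F)$ with $n-1 \leq p-1$, the induction hypothesis (or simply choosing $A' = \begin{pmatrix} I_{n-1} & 0\end{pmatrix}$) yields a rank $n-1$ choice for $A'$, hence a rank $n$ matrix overall. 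The base case $n=1$ is trivial: a nonzero first row already has rank $1$.

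For the strengthened statement when $n<p$, the assumption is only that the first row is nonzero, so I cannot assume the first column meets it in a nonzero entry. Here I would permute columns so that the $(1,1)$ entry is nonzero, normalize it to $1$, and then freely fill the remaining entries of the first column with zeros (they are all "other entries" since only the first row is constrained). This puts $A$ in the shape above with first column $e_1$, and again $\rk A = 1 + \rk A'$ with $A'$ a completely free $(n-1)\times(p-1)$ block; choosing $A' = \begin{pmatrix} I_{n-1} & 0 \end{pmatrix}$ (which fits because $n-1 \leq p-1$) finishes the argument. The point of separating the two cases is that when $n=p$ the first column of the completed matrix cannot be made proportional to $e_1$ unless the original first column was already nonzero, which is exactly why that hypothesis is needed in the square case.

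I do not expect a genuine obstacle here — the lemma is elementary and the only mild subtlety is bookkeeping which entries are "free" versus "prescribed" so that the row/column clearing operations stay within the allowed set of modifications. The cleanest presentation avoids induction entirely: directly exhibit the completion by setting the first column equal to $e_1$ (legitimate when either $n<p$, or when the original first column was nonzero so that after a row permutation the corner is nonzero and the rest of that column may be zeroed out) and the lower-right $(n-1)\times(p-1)$ block equal to $\begin{pmatrix} I_{n-1} & 0\end{pmatrix}$, leaving the remainder of the first row untouched; the resulting matrix is visibly of rank $n$. I would write it that way to keep the proof to a few lines.
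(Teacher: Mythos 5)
There is a genuine gap, and it sits exactly where the lemma has real content: the case where the $(1,1)$ entry of $A$ is zero. The entries you are allowed to modify are precisely those outside the first row \emph{and} the first column; the first row and first column are prescribed data. A permutation that brings a nonzero entry into the corner must swap column $1$ with some column $j\geq 2$ (or row $1$ with some row $i\geq 2$), and this does \emph{not} preserve the partition into prescribed and free entries: the prescribed entries of the old first column land inside the lower-right block of the permuted matrix, while genuinely free entries move into the new first column. Your subsequent step ``set the lower-right block equal to $\begin{bmatrix} I_{n-1} & 0\end{bmatrix}$'' then overwrites prescribed data. Concretely, take $n=p=3$ with first row $(0,1,1)$ and first column $(0,1,1)^T$: after swapping columns $1$ and $2$, your recipe demands that the $(3,2)$ entry be $0$, but that entry is the prescribed value $a_{31}=1$, so the completion you exhibit is not a legal completion of the original matrix (a legal one exists, e.g.\ with lower-right block $\begin{bmatrix} 0&0\\1&0\end{bmatrix}$, but not by your route). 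The same objection defeats the ``cleanest presentation'': setting the first column equal to $e_1$ is never legitimate, since that column is fixed. The paper avoids this by normalizing only within rows $2,\dots,n$ and columns $2,\dots,p$ (which does preserve the fixed/free structure) and then splitting into cases; the case you are missing is precisely corner zero with both off-corner parts of the first row and first column nonzero, which is handled by a completion whose top-left $2\times 2$ corner is the antidiagonal block $\begin{bmatrix} 0&\ell\\ c&0\end{bmatrix}$ with $I_{n-2}$ placed below and to the right. Your argument is sound only in the subcases where the corner is already nonzero.

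A second, related misreading occurs for $n<p$: you assert that when only the first row is assumed nonzero, the entries of the first column below the corner ``are all other entries'' and may be filled freely. They may not. In both halves of the lemma the modifiable entries are the lower-right $(n-1)\times(p-1)$ block only --- this is forced by the way the lemma is applied later, where the available perturbations of $M\in\calM$ are supported on exactly that block, so the first column of $M$ (zero or not) must stay put. The correct completion when $n<p$ and the first column is zero keeps that column zero and places $I_{n-1}$ in columns $3,\dots,n+1$, which fits precisely because $n<p$. To repair your proof, keep the corner-nonzero reduction (the row-operation step is a valid rank-preserving bijection between completion problems), but add the antidiagonal construction for the zero-corner case and respect the fixed first column when $n<p$.
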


\begin{proof}
Write
$$A=\begin{bmatrix}
a & L_0 \\
C_0 & [?]_{(n-1) \times (p-1)}
\end{bmatrix} \quad \text{with $a\in \F$, $C_0 \in \F^{n-1}$ and $L_0 \in \Mat_{1,p-1}(\F)$.}$$
By a simple change of bases, we can further assume that $C_0$ and $L_0$ have all their entries zero starting from the second one (in each matrix).

If $C_0$ is zero then $a \neq 0$ and it suffices to substitute the lower-right block for $\begin{bmatrix}
I_{n-1} & [0]_{(n-1) \times (p-n)}
\end{bmatrix}$. Ditto if $L_0$ is zero.

If $C_0 \neq 0$ and $L_0 \neq 0$, then we substitute the lower-right block for
$$\begin{bmatrix}
0 & [0]_{1 \times (n-2)} & [0]_{1 \times (p-n)} \\
[0]_{(n-2) \times 1} &  I_{n-2} & [0]_{(n-2) \times (p-n)}
\end{bmatrix}.$$

Finally, if $n<p$ and the first column of $A$ is zero but not its first row, we simply substitute
the lower-right block for
$\begin{bmatrix}
[0]_{(n-1) \times 1} & I_{n-1} &  [0]_{(n-1) \times (p-n-1)}
\end{bmatrix}$.
\end{proof}

\begin{lemma}[Extraction lemma]
Let $0<n \leq p$ be positive integers.
Let
$$A=\begin{bmatrix}
K & [?]_{(n-1) \times 1} \\
[?]_{1 \times (p-1)} & ?
\end{bmatrix}\in \Mat_{n,p}(\F) \quad \text{with $K \in \Mat_{n-1,p-1}(\F)$.}$$
Denote by $E_{n,p}$ the matrix of $\Mat_{n,p}(\F)$ with exactly one nonzero entry, located at the $(n,p)$-spot, and which equals $1$.
Assume that $\rk A<n$ and $\rk (A+E_{n,p})<n$. Then $\rk K<n-1$.
\end{lemma}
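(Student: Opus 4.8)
The plan is to argue by contradiction: I assume $\rk K = n-1$ — the largest value possible, since $K$ has only $n-1$ rows — together with $\rk A < n$ and $\rk(A+E_{n,p}) < n$, and I aim for an absurdity.

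First I would note that the rows of $K$ are exactly the restrictions to the first $p-1$ columns of the first $n-1$ rows of $A$. As $\rk K = n-1$, those restricted rows are linearly independent, hence so are the first $n-1$ rows of $A$, and likewise the first $n-1$ rows of $A+E_{n,p}$ (they are the same rows). Since in addition $\rk A < n$ and $\rk(A+E_{n,p}) < n$, this forces $\rk A = \rk(A+E_{n,p}) = n-1$.

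Next, denote by $R_1,\dots,R_n$ the rows of $A$ and by $f := (0,\dots,0,1) \in \Mat_{1,p}(\F)$ the last row of $E_{n,p}$, so that the rows of $A+E_{n,p}$ are $R_1,\dots,R_{n-1},R_n+f$. Because $R_1,\dots,R_{n-1}$ are independent and the two matrices have rank $n-1$, both $R_n$ and $R_n+f$ lie in the span of $R_1,\dots,R_{n-1}$; subtracting, $f = \sum_{i=1}^{n-1} \nu_i R_i$ for suitable scalars $\nu_i$. Restricting this identity to the first $p-1$ columns, the left-hand side becomes $0$ while the right-hand side becomes $\sum_{i=1}^{n-1} \nu_i K_i$, where $K_1,\dots,K_{n-1}$ are the rows of $K$; since those are independent, every $\nu_i$ vanishes, so $f = 0$, which is absurd. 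Hence $\rk K < n-1$.

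The computation is short, and I do not anticipate a real obstacle: the only care needed is in keeping track of which rows and columns get truncated, and the one idea that makes it work is the observation that the two rank hypotheses together force the ``last unit row'' $f$ into the row space of the top $n-1$ rows of $A$, which then collides with the full row rank of $K$.
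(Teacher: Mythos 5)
Your proof is correct and follows essentially the same route as the paper's: assume $\rk K=n-1$, note the first $n-1$ rows of $A$ are then independent, force the last rows of both $A$ and $A+E_{n,p}$ into their span, subtract to place $f=(0,\dots,0,1)$ in that span, and derive a contradiction by truncating to the first $p-1$ columns. The only cosmetic difference is that you conclude ``all coefficients vanish, so $f=0$'' where the paper concludes ``the coefficients give a nonzero $Y$ with $Y^TK=0$''; these are the same contradiction read in opposite directions.
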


\begin{proof}
This can be proved by using determinants, but a more elementary argument is easily accessible.
Assume that $\rk K=n-1$. Then the first $n-1$ rows of $A$ are linearly independent.
As $\rk A<n$ and $\rk (A+E_{n,p})<n$, we deduce that in $A$ and $A+E_{n,p}$ the last row
is a linear combination of the first $n-1$ rows of $A$. By subtracting, it follows that the last row of $E_{n,p}$ is a (nontrivial) linear combination
of the first $n-1$ rows of $A$, which would yield a vector $Y \in \F^{n-1} \setminus \{0\}$ such that $Y^TK=0$,
contradicting the assumption that $\rk K=n-1$. Hence $\rk K<n-1$.
\end{proof}

\subsection{The proof of the inequality statement and the start of the case of equality}

Now, on to the proof of the inequality statement. The proof works by induction on $p$ and $n$.
Remember that $S$ denotes the translation vector space of the affine subspace $\calS$.
Let $y \in V \setminus \{0\}$.
Noting that $\Hom(U,\F y)$ has dimension $p$, we readily find that
$$\dim S_{(y)}=0 \Rightarrow \dim \calS \leq np-p.$$
Next, assume that we have found a vector $y \in V \setminus \{0\}$ such that $0<\dim S_{(y)}<p$.
Take a linear form $f \in U^\star \setminus \{0\}$ such that $f \otimes y \in S$. Let us take a basis $(e_1,\dots,e_p)$ of $U$
in which the first vectors are annihilated by $f$ and $f(e_p)=1$, and a basis $(y_1,\dots,y_n)$ of $V$ with $y_n=y$.
Denote by $\calM$ the matrix (affine) space attached to $\calS$ in those bases, and by $\overrightarrow{\calM}$ its translation vector space. We write every matrix $M \in \Mat_{n,p}(\F)$
in block form as
$$M=\begin{bmatrix}
K(M) & C(M) \\
[?]_{1 \times (n-1)} & ?
\end{bmatrix} \quad \text{with $K(M) \in \Mat_{n-1,p-1}(\F)$ and $C(M) \in \F^{n-1}$.}$$
Note in particular that $\overrightarrow{\calM}$ contains the matrix unit $E_{n,p}$, which represents $f \otimes y$ in the said bases.
Denote by $\overrightarrow{\calM_0}$ the kernel of $M \in \overrightarrow{\calM} \mapsto (K(M),C(M))$.
Then:
\begin{itemize}
\item For all $M \in \calM$, the extraction lemma yields that $\rk K(M)<n-1$, and hence by induction
$\dim K(\calM) \leq (n-2)(p-1)$.
\item The space $\overrightarrow{\calM_0}$ is naturally isomorphic to $S_{(y)}$, so it has the same dimension.
\item Finally $\dim \calM \leq \dim \overrightarrow{\calM_0}+\dim K(\calM)+(n-1)$ by the rank theorem.
\end{itemize}
This yields
$$\dim \calS=\dim \calM \leq \dim S_{(y)}+(n-2)(p-1)+(n-1) \leq (n-1)p,$$
with equality holding only if $\dim S_{(y)}=p-1$.

The inequality statement is then obtained by noting that there is at least one vector $y \in V \setminus \{0\}$ such that $\dim S_{(y)}<p$,
otherwise $S$ would contain all the rank $1$ linear maps from $U$ to $V$, and hence all the linear maps from $U$ to $V$
because $S$ is a linear subspace of $\Hom(U,V)$, and finally $\calS=\Hom(U,V)$, contradicting the assumption that $\calS$ contains no surjective operator.

At this point, we have only used well-known arguments. The innovation comes now.
Assume that $\dim \calS=(n-1)p$, and note that $\dim S^\bot=p=\dim U$ here.
Now, from the above proof of the inequality statement, we deduce that $\dim S_{(y)}$ can only take the values $0$, $p-1$ and $p$, when $y$ ranges over $V \setminus \{0\}$.
So, we deduce from \eqref{equation:ranghaty} that in $\calT:=\widehat{S^\bot}$ the only possible ranks of the operators are $0$, $1$ and $p$.
In turn, this shows that the subset
$$\calT_1:=\{v \in \calT : \; \rk v \leq 1\}$$
is actually a linear subspace of $\calT$ unless $p=2$. And then Schur's theorem can be applied to it.

A fundamental issue is the potential existence of rank $p$ operators in $\calT$.
This difficulty can be overcome in the special case of an algebraically closed field, and it is precisely for such fields that
our proof works best. A second best situation is when $\F$ has sufficiently many elements, in which case an extension of the scalar field
to an algebraic closure allows one to reduce this situation to the previous one (Section \ref{section:redtoalgclosed}). And finally, for a field of small cardinality the idea gives a partial conclusion but it must be combined with more classical tools to conclude the proof (Section \ref{section:allfields}).

\subsection{The case of equality for an algebraically closed field}

We come back to the previous situation, and now we assume that $\F$ is algebraically closed.
As a first consequence, we observe that $\Vect(\calS)$ contains no rank $n$ operator, since the set of all operators in $\Hom(U,V)$ with rank less than $n$ is Zariski-closed
(this actually only uses the fact that $\F$ is infinite). But since $\calS \subset \Vect(\calS)$ and $\dim \Vect(\calS) \leq (n-1)p$ by the inequality statement,
we deduce that $\calS=\Vect(\calS)$, i.e.\ $\calS=S$.

Next, Schur's theorem readily yields the claimed result if $p=2$, and hence we can assume $p \geq 3$ from now on.

Now the key point, which is helped by the algebraic closeness of $\F$, is that $\calT_1$ includes a linear hyperplane of $\calT$.
Indeed, otherwise there would be two linearly independent operators $a,b$ in $\calT$ whose nontrivial linear combinations all have rank $p$.
Then $ab^{-1}$ would be an endomorphism of $U$ with no eigenvalue!
In turn, this yields a linear hyperplane $H$ of $V$ such that $\forall y \in H, \; \widehat{y} \in \calT_1$.
And now we apply Schur's theorem to $\calT_1$. There are two cases:
\begin{itemize}
\item[(A)] There is a linear subspace $W$ of $(S^\bot)^\star$ and a non-zero vector $x \in U$ such that $\calT_1=W \otimes x$.

\item[(B)] There is a linear subspace $U_0$ of $U$ and a nonzero linear form $\varphi$ on $S^\bot$ such that $\calT_1=\varphi \otimes U_0$.
\end{itemize}

Obtaining the conclusion will now be rather easy. We start with case (A).
The fact that $\widehat{y} \in \calT_1$ for all $y \in H$ yields that all the operators in $S^\bot$ map $H$ into $\F x$.
By double-orthogonality, this yields that $S$ contains all the operators that vanish at $x$ and map $U$ into $H$.
Now, we choose a basis $\bfB_1$ of $U$ in which $x$ is the first vector, and a basis $\bfB_2$ of $V$ in which the last $p-1$ vectors span $H$.
The matrix space $\calM$ that represents $S$ in those bases then contains every matrix of the form
$$\begin{bmatrix}
0 & [0]_{1 \times (p-1)} \\
[0]_{(n-1) \times 1} & [?]_{(n-1) \times (p-1)}
\end{bmatrix}.$$

Now, let $M \in \calM$. Adding an appropriate matrix of the above form to $M$, we always obtain a matrix of $\calS$
and hence a matrix with rank less than $n$. Hence the substitution lemma (Lemma \ref{lemma:substitution}) yields that
the first row of $M$ is zero, unless $n=p$ in which case the only other possibility is that the first column of $M$ is zero.

Hence, if $n<p$ we immediately conclude that all the matrices in $\calM$ have their first row zero.
Assume now that $n=p$. Then we know that every matrix in $\calM$ has its first row zero or its first column zero.
Remembering that $\calS=S$, a linearity argument yields that all the matrices in $\calM$ have their first row zero, or all have their first column zero.
To sum up:
\begin{itemize}
\item Either all the matrices in $\calM$ have their first row zero, which yields that the linear hyperplane $H$ includes the range of every operator in $\calS$;
\item Or $n=p$ and all the operators in $\calS$ vanish at $x$.
\end{itemize}
This completes the study of case (A).

\vskip 3mm
We conclude the proof by assuming that case (B) holds. This time around,
the condition that $\forall y \in H, \; \widehat{y} \in \calT_1= \varphi \otimes U_0$
has the consequence that all the operators $v \in \Ker \varphi$ vanish everywhere on the linear hyperplane $H$.
Take $\psi \in V^\star$ with kernel $H$.
As a consequence $\Ker \varphi=\psi \otimes H'$ for a linear subspace $H'$ of $U$, with $\dim H'=\dim \Ker \varphi=p-1$,
so $H'$ is a linear hyperplane of $U$.
In particular $\psi \otimes H' \subset S^\bot$, which yields that every operator in $S$ maps $H'$ into $H$
(indeed, for all $x\in H'$ and all $u \in S$ we write $0=\tr(u \circ (\psi \otimes x))=\psi(u(x))$ to obtain $u(x) \in \Ker \psi=H$).
Now, take a basis $\bfB_1$ of $U$ whose last vectors span $H'$, and a basis $\bfB_2$ of $V$ whose last vectors span $H$,
and let us represent $S$ by a space $\calM$ of matrices in those bases. Every matrix $M$ of $\calM$ then takes the simplified block form
$$M=\begin{bmatrix}
a(M) & [0]_{1 \times (p-1)} \\
C(M) & K(M)
\end{bmatrix} \quad \text{with $K(M) \in \Mat_{n-1,p-1}(\F)$,}$$
and the conclusion is near. Indeed, assume that $a$ does not vanish everywhere on $\calM$.
Then it is surjective, the set $\calS_1:=a^{-1}\{1\}$ is an affine hyperplane of $S$,
and we must have $\rk K(M)<n-1$ for all $M \in \calS_1$ (otherwise $M$ clearly has rank $n$).
Hence by the inequality statement in Theorem \ref{theo:main} we find $\dim K(\calS_1) \leq (n-2)(p-1)<(n-1)(p-1)-1$,
and then the rank theorem yields
$$\dim S \leq 1+(n-1)+\dim K(\calS_1) <(n-1)p,$$
thereby contradicting our assumptions.
Hence $a=0$ and we conclude that all the operators in $S$ have their range included in $H$.

This completes the proof for an algebraically closed field.

\subsection{Reduction to the algebraically closed case}\label{section:redtoalgclosed}

We now make the classical observation that the algebraically closed case is sufficient to handle
the case where $|\F|>n$. Let us take an algebraic closure $\Fbar$ of $\F$.

To justify the reduction to $\Fbar$, we consider the matrix version of the problem, where we take an affine subspace $\calM$ of $\Mat_{n,p}(\F)$
that contains no rank $n$ matrix and has dimension $(n-1)p$. Denote by $\overrightarrow{\calM}$ its translation vector space.
Take $A \in \calM$. The extended affine space $A+\Vect_{\Fbar}(\overrightarrow{\calM}) \subseteq \Mat_{n,p}(\Fbar)$ then has dimension $(n-1)p$ over $\Fbar$,
and now the condition $|\F|>n$ allows us to see that all its elements have rank less than $n$
(because this condition is defined by the vanishing of the $n\times n$ minors, which are polynomials of total degree $n$).
Hence, the previous case applies, and we must recover that the conclusion holds for $\calM$.
So, assume first that the first conclusion of Dieudonn\'e's theorem holds for $\calM$. Then we have a nonzero vector $Y \in \overline{\F}^n$
such that $Y^T M=0$ for all $M \in \calM$. We can take a basis $(\alpha_1,\dots,\alpha_d)$ of the $\F$-linear subspace of $\Fbar$ spanned by the entries of $Y$, and write
$Y=\sum_{k=1}^d \alpha_k Y_k$ for some $Y_1,\dots,Y_d$ in $\F^n$, all nonzero. Then $Y_k^T M=0$ for all $k \in \lcro 1,d\rcro$ and all $M \in \calM$.

If the second conclusion holds then $n=p$ and the first conclusion holds for $\overline{\calM}^T$. Then, by transposing the previous proof we obtain a vector
$X \in \F^p \setminus \{0\}$ such that $MX=0$ for all $M \in \calM$. Note that the last option in Theorem \ref{theo:main} is irrelevant here
because $|\F|>2$.

\subsection{A hybrid proof for an arbitrary field}\label{section:allfields}

In this final section, we no longer make any specific assumption on the field $\F$.
In that case we must combine our idea with more classical strategies.

In fact, we now split the discussion into two cases:
\begin{itemize}
\item[(C)] The space $\calT$ contains no rank $p$ operator.
\item[(D)] The space $\calT$ contains at least one rank $p$ operator.
\end{itemize}

So, assume that case (C) holds. Then everything is fine as we can directly apply Schur's theorem to $\calT$ itself.
And it is even easier than before because one option is ruled out in Schur's theorem for $\calT$: there cannot be a nonzero
$v \in S^\bot$ at which all the operators in $\calT$ vanish, for this would precisely mean that $v=0$!
Hence all the operators in $\calT$ map into a certain $1$-dimensional subspace $\F x$ of $U$.
This means that all the operators in $S^\bot$ map into $\F x$, and if $n<p$ this is not possible because $\dim S^\bot=p$.
Hence $n=p$. Now, by double-orthogonality we recover that $S$ contains all the operators of $\Hom(U,V)$
that vanish at $x$. Finally, assume that some $u \in \calS$ does not vanish at $x$, and extend $x$ into a basis $(x,e_2,\dots,e_p)$ of $U$
and $u(x)$ into a basis $(u(x),y_2,\dots,y_p)$ of $V$. Then $S$ contains the linear mapping $u_0$ that takes $x,e_2,\dots,e_p$
respectively to $0,y_2-u(e_2),\dots,y_p-u(e_p)$, and $u+u_0$ takes the basis $(x,e_2,\dots,e_p)$
to the basis $(u(x),y_2,\dots,y_p)$, contradicting the assumption that $\calS$ contains no rank $p$ operator.

So, all that remains is to prove the claimed conclusion when case (D) holds. However, here the duality argument seems fruitless
with no extra assumption on $\F$, and in that case we settle back to well-known techniques, which we will quickly sketch.
So, assume that case (D) holds. By the duality argument, this means that there is a non-zero vector $y \in V \setminus \{0\}$ such that $S_{(y)}=\{0\}$.
Then we take a matrix space $\calM$ that represents $\calS$, where $y$ is the last vector of the basis of $V$ under consideration.
We write the matrices of $\calM$ and of its translation vector space $\overrightarrow{\calM}$ as follows:
$$M=\begin{bmatrix}
H(M) \\
R(M)
\end{bmatrix} \quad \text{with $H(M) \in \Mat_{n-1,p}(\F)$ and $R(M) \in \Mat_{1,p}(\F)$.}$$
In $\overrightarrow{\calM}$ the vanishing of $H(M)$ implies the one of $R(M)$, as a consequence of $\calS_{(y)}=\{0\}$.
Hence $\dim H(\calM)=\dim \calM$, so $H(\calM)=\Mat_{n-1,p}(\F)$.
Hence there is an affine mapping $\varphi : \Mat_{n-1,p}(\F) \rightarrow \Mat_{1,p}(\F)$ such that
$$\forall M \in \calM, \; R(M)=\varphi(H(M)).$$
Unless $n=p=2$ and $|\F|=2$ we must prove that $\varphi$ takes the form $N \mapsto Y^T N$ for some $Y \in \F^{n-1}$.
We will entirely discard the very special case where $n=p=|\F|=2$ and $\calS$ is not a linear subspace, as it requires a
different analysis.

From there, we will only sketch the ideas as they are now classical.

We shall prove that $\varphi(N)$ always belongs to the row space of $N$.
To do this, we start by considering a special situation. Assign to every $C \in \Mat_{n-1,p-1}(\F)$
the last entry $\alpha(C)$ of the row $\varphi(\begin{bmatrix}
C & [0]_{(n-1) \times 1}
\end{bmatrix}$). This is an affine mapping that vanishes at all the rank $n-1$ matrices of $\Mat_{n-1,p-1}(\F)$.
Assume that it does not vanish everywhere. Then it is surjective onto $\F$. Take an arbitrary element $a \in \F \setminus \{0\}$
and consider $\alpha^{-1}\{a\}$, which is a linear hyperplane of $\Mat_{n-1,p-1}(\F)$ that contains no rank $n-1$ matrix.
If $p>2$, the inequality in Theorem \ref{theo:main} yields a contradiction.
Hence $p=2$, then $n=2$ and $\alpha$ is a nonconstant affine endomorphism of $\F$, and hence a bijection.
As $\alpha$ takes every nonzero element to $0$, we must have $|\F|=2$ and $\alpha : x \mapsto 1-x$, whence $\calS$ is not a linear subspace, a case we have ruled out from the start.

It follows that whenever $N \in \Mat_{n-1,p}(\F)$ has last column zero, the row $\varphi(N)$ has its last entry zero.
Performing a change of basis, this yields that whenever a linear hyperplane of $\Mat_{1,p}(\F)$
includes the row space of some matrix $N \in \Mat_{n-1,p}(\F)$, then it must contain $\varphi(N)$. Writing the row space of $N$ as an intersection of linear hyperplanes
of $\Mat_{1,p}(\F)$, we conclude that $\varphi(N)$ always belongs to the row space of $N$. In particular, $\varphi$ vanishes at the zero matrix,
and hence $\calS$ is a linear subspace of $\Mat_{n,p}(\F)$.

From there, one is reduced to the basic theorem on \emph{range-compatible} linear maps, in its transposed version
(see \cite{dSPRC1}).
Applying the transposed matrix version of proposition 2.5 of \cite{dSPRC1}, we recover a vector $Y \in \F^{n-1}$ such that
$\varphi(N)=Y^T N$ for all $N \in \Mat_{n-1,p}(\F)$. Then the extended vector
$\widetilde{Y_0}:=\begin{bmatrix}
Y \\
-1
\end{bmatrix}$ is nonzero and satisfies $\widetilde{Y_0}^T M=0$ for all $M \in \calS$. This yields the first outcome in Theorem \ref{theo:main}, thereby completing the proof.

\end{document}